\begin{document}

\newtheorem{thm}{Theorem}[section]
\newtheorem{lem}[thm]{Lemma}
\newtheorem{cor}[thm]{Corollary}
\newtheorem{add}[thm]{Addendum}
\newtheorem{prop}[thm]{Proposition}
\theoremstyle{definition}
\newtheorem{defn}[thm]{Definition}
\theoremstyle{remark}
\newtheorem{rmk}[thm]{Remark}
\newcommand{\SLtwoC}{\mathrm{SL}(2,{\mathbf C})}
\newcommand{\SLtwoR}{\mathrm{SL}(2,{\mathbf R})}
\newcommand{\PSLtwoC}{\mathrm{PSL}(2,{\mathbf C})}
\newcommand{\PSLtwoR}{\mathrm{PSL}(2,{\mathbf R})}
\newcommand{\SLtwoZ}{\mathrm{SL}(2,{\mathbf Z})}
\newcommand{\PSLtwoZ}{\mathrm{PSL}(2,{\mathbf Z})}
\newcommand{\CmodTwoPiIZ}{{\mathbf C}/2\pi i {\mathbf Z}}
\newcommand{\MCG}{\mathcal{MCG}}
\newcommand{\Cnozero}{{\mathbf C}\backslash \{0\}}
\newcommand{\Cinfty}{{\mathbf C}_{\infty}}
\newcommand{\HH}{H^2}
\newcommand{\HHH}{H^3}
\newcommand{\tr}{{\hbox{tr}}}
\newcommand{\Hom}{\mathrm{Hom}}
\newcommand{\SL}{\mathrm{SL}}
\newenvironment{pf}{\noindent {\it Proof.}\quad}{\square \vskip 10pt}

\title[A new identity for $\SLtwoC$-characters]{A new identity for $\SLtwoC$-characters of the once punctured torus group}
\author{Hengnan Hu, Ser Peow Tan, and Ying Zhang}
\address{Department of Mathematics \\ National University of Singapore \\ Singapore 119076} \email{huhengnan@nus.edu.sg}
\address{Department of Mathematics \\ National University of Singapore \\ Singapore 119076} \email{mattansp@nus.edu.sg}
\address{School of Mathematical Sciences \\ Soochow University \\ Suzhou 215006 \\ China} \email{yzhang@suda.edu.cn}

\dedicatory{To Professor Sadayoshi Kojima on the occasion of his
sixtieth birthday}

\subjclass[2000]{57M05;  53C22; 30F60; 20H10; 37F30}

\thanks{
 The second author is partially supported by the National University of Singapore academic research grant R-146-000-186-112.
 The third author is supported by NSFC (China) grant no. 11271276 and
 Ph.D. Programs Foundation (China) grant no. 20133201110001. }

%
%

 \begin{abstract}
 We obtain new variations of the original McShane identity for those $\SLtwoC$-representations of the once punctured torus group
 which satisfy the Bowditch conditions, and also for those fixed up to conjugacy by an Anosov mapping class of the torus
 and satisfying the relative Bowditch conditions.
 \end{abstract}

 \maketitle




 \vspace{10pt}

 \section{{\bf Introduction}}\label{s:intro}

 For a once punctured torus $T$ equipped with any complete hyperbolic structure of finite area,
 Greg McShane in \cite{mcshane1991thesis} obtained the original McShane identity:
 \begin{eqnarray}\label{eqn:mcshane}
 \sum_{\gamma}\frac{1}{1+e^{l(\gamma)}}=\frac{1}{2},
 \end{eqnarray}
 where the sum is over all simple closed geodesics $\gamma$ in $T$, with $l(\gamma)$ the length of $\gamma$.

 The identity (\ref{eqn:mcshane}) has since been generalized to hyperbolic surfaces with cusps (McShane \cite{mcshane1998im})
 or smooth geodesic boundary (Mirzakhani \cite{mirzakhani2007im}) or conic singularities (Tan-Wong-Zhang \cite{tan-wong-zhang2006jdg}),
 and eventually from a different point of view to closed surfaces (Luo-Tan \cite{luo-tan}),
 along with other related versions (for example, Rivin \cite{rivin2012advm}).

 On the other hand, Brian Bowditch gave in \cite{bowditch1996blms} a simple proof of identity (\ref{eqn:mcshane}) via Markoff triples,
 and extended it in \cite{bowditch1998plms} to type-preserving representations of the once punctured torus group
 into ${\rm SL}(2, \mathbf C)$ satisfying certain conditions (which we call the Bowditch conditions).
 In particular, he showed identity (\ref{eqn:mcshane}) holds for quasi-Fuchsian representations of the once punctured torus group.
 S. P. Tan, Y. L. Wong and Y. Zhang in \cite{tan-wong-zhang2008advm} generalized the McShane-Mirzakhani identity to an identity
 for general irreducible representations of the once punctured torus group into ${\rm SL}(2, \mathbf C)$ satisfying the same set of Bowditch conditions.

 In this paper we obtain new identities for general irreducible representations of the once punctured torus group into ${\rm SL}(2, \mathbf C)$
 satisfying the Bowditch conditions.

 It is well known that a complete hyperbolic structure on $T$ gives rise to (the conjugacy class of) a discrete, faithful representation of $\pi_1(T)$ into $\PSLtwoR$, the group of orientation-preserving isometries of the upper half-plane model of the hyperbolic plane $\HH$.
 Since $\pi_1(T)$ is a free group (of rank two), we may lift the representations into $\SLtwoR$ and simply consider representations of $\pi_1(T)$ into $\SLtwoR$.
 In general, to study  deformations of related  hyperbolic $3$-manifolds, we consider the representation set ${\rm Hom}\,(\pi_1(T), \SLtwoC)$.

 We say that a representation $\rho: \pi_1(T) \rightarrow \SLtwoC$ has {\it peripheral trace} $\tau \in {\mathbf C}$ if
 $$
 {\rm tr}\,\rho(aba^{-1}b^{-1}) = \tau
 $$
 where $\{a,b\}$ is a free basis of $\pi_1(T)$.
 It can be shown that $\tau$ is independent of the choice of the free basis,
 and that $\rho$ is reducible if and only if it has peripheral trace $\tau=2$.
 In the special case where $\tau=-2$, $\rho$ is said to be {\it type-preserving},
 as $\rho(aba^{-1}b^{-1})\in\SLtwoC$ is parabolic unless ${\rm tr}\,\rho(a)={\rm tr}\,\rho(b)=0$.

 Any two representations $\rho, \rho': \pi_1(T) \rightarrow \SLtwoC$ which are conjugate by an element in $\SLtwoC$ have the same trace values.
 Conversely, two irreducible representations $\rho, \rho': \pi_1(T) \rightarrow \SLtwoC$ with the same trace values are conjugate by an element in $\SLtwoC$.
 In fact, from the trace values of $\rho$ on a triple $a,b,ab$ where $\{a,b\}$ is a free basis of $\pi_1(T)$,
 W. Goldman \cite[page 451]{goldman2003gt} obtained a very simple representative $\rho$ of its conjugacy class.

 Let $\Omega$ be the set of isotopy classes of (unoriented) essential (i.e., nontrivial and not boundary-parallel) simple closed curves in $T$.
 Thus an element $X \in \Omega$ corresponds to the union of two conjugacy classes of a pair of inverse elements in $\pi_1(T)$;
 in particular, the trace ${\rm tr}\,\rho(X) \in {\mathbf C}$ is well defined.

 An irreducible representation $\rho: \pi_1(T) \rightarrow \SLtwoC$ is said to satisfy the {\it Bowditch conditions} if
 (i) ${\rm tr}\,\rho(X) \not\in [-2,2] \subset {\mathbf R}$ for all $X \in \Omega$, and
 (ii) the set $\{ X \in \Omega \,:\, |\tr\,\rho(X)| \le 2\}$ is finite (possibly empty).

 As the main theorem of this paper, we have

 \begin{thm}\label{thm:main}
 Let $\rho: \pi_1(T) \rightarrow \SLtwoC$ be an irreducible representation
 and let $\mu=\tau+2$ where $\tau \in \mathbf C$ is the peripheral trace of $\rho$.
 If $\rho$ satisfies the Bowditch conditions then
 \begin{equation}\label{eqn:main}
 \sum_{X \in \Omega} h_{\mu}(x) = \frac{1}{2},
 \end{equation}
 where the infinite sum converges absolutely, $x={\rm tr}\,\rho(X)$, and the function
 $h_{\mu}:{\mathbf C}\backslash \{[-2,2],\pm \sqrt{\mu}\} \rightarrow {\mathbf C}$ is defined by
 \begin{equation}\label{eqn:hmu}
  h_{\mu}(x) = \frac{1}{2} \left(1- \frac{x^2-\frac23\mu}{x^2-\mu} \sqrt{1-\frac{4}{x^2}}\right).
 \end{equation}
 \end{thm}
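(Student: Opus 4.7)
My plan is to adapt the Markoff-triple / Farey-tessellation framework that Bowditch and, for general peripheral trace, Tan--Wong--Zhang have used for McShane-type identities. Fix a free basis $\{a,b\}$ of $\pi_1(T)$ and set $x=\tr\rho(a)$, $y=\tr\rho(b)$, $z=\tr\rho(ab)$. The Fricke identity gives $\tr\rho(aba^{-1}b^{-1})=x^2+y^2+z^2-xyz-2$, so the triple satisfies the generalised Markoff equation $x^2+y^2+z^2-xyz=\mu$. The set $\Omega$ is parametrised by $\mathbf{Q}\cup\{\infty\}$ (rational slopes) and corresponds bijectively to the edges of the Farey tessellation $\mathcal{F}$ of $\HH$; each ideal triangle of $\mathcal{F}$ carries a generalised Markoff triple, and crossing an edge replaces $z$ by $z'=xy-z$, the other root of $w^2-xyw+(x^2+y^2-\mu)=0$.

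I would first use the Bowditch conditions to control the sum analytically. Only finitely many edges $X\in\Omega$ satisfy $|\tr\rho(X)|\le 2$, so they can be collected in a finite connected subcomplex $\mathcal{K}_{0}\subset\mathcal{F}$. Following Bowditch's trace estimates, along any path of adjacent triangles moving outward from $\mathcal{K}_{0}$ the traces grow at a Fibonacci-like rate. Since the rational factor $(x^2-\tfrac23\mu)/(x^2-\mu)\to 1$ as $|x|\to\infty$, one has $h_{\mu}(x)=O(|x|^{-2})$; this gives absolute convergence of the series in (\ref{eqn:main}) and ensures that finite truncations approximate it with exponentially small error.

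The algebraic heart of the argument is a local identity on each Markoff triple. I would look for a \emph{gap function} $\phi_{\mu}(x;y,z)$, symmetric in $y,z$ and attached to the edge $x$ inside the triangle $(x,y,z)$, satisfying a splitting
\begin{equation*}
h_{\mu}(x)=\phi_{\mu}(x;y,z)+\phi_{\mu}(x;y',z''),
\end{equation*}
where $(x,y,z)$ and $(x,y',z'')$ are the two Farey triangles sharing the edge $x$, together with a triple-wise constancy $\phi_{\mu}(x;y,z)+\phi_{\mu}(y;x,z)+\phi_{\mu}(z;x,y)=c_{\mu}$. The precise form of $\phi_{\mu}$ should be forced by these two requirements combined with the Markoff equation; in particular the rational factor $(x^2-\tfrac23\mu)/(x^2-\mu)$ in the definition of $h_{\mu}$ should emerge naturally, the $\tfrac23$ reflecting the threefold symmetry of a triple, while the square root $\sqrt{1-4/x^2}$ appears as the discriminant of the flip quadratic. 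Verifying the identity is a direct if intricate algebraic calculation using the Markoff equation to eliminate one variable; as a sanity check one would first confirm the type-preserving case $\mu=0$, where $h_{0}(2\cosh(l/2))=(1+e^{l})^{-1}$ must recover the classical Bowditch--McShane identity $\sum(1+e^{l})^{-1}=\tfrac12$. An alternative route, if the splitting is recalcitrant, would be to start from the Tan--Wong--Zhang identity for peripheral trace $\tau$ and derive the new identity by adding an exact functional relation whose Farey-sum vanishes.

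Granting the local identity, summing the splitting over the edges of a large finite subcomplex $\mathcal{K}_{N}\supset\mathcal{K}_{0}$ exhausting $\mathcal{F}$ then telescopes: each interior edge contributes $h_{\mu}$ once, while the triple constancy reduces the total to $c_{\mu}\cdot\#\{\text{triangles in }\mathcal{K}_{N}\}$ minus a boundary correction, which by the Fibonacci trace-growth tends to zero as $N\to\infty$; matching $c_{\mu}$ against the known special case forces the value $\tfrac12$ on the right-hand side. The main obstacles I anticipate are, first, guessing and algebraically verifying the explicit form of $\phi_{\mu}$, because of the delicate interaction between the rational factor, the square root and the flip $z\mapsto xy-z$; and second, handling the (finitely many, generically empty) edges where $\tr\rho(X)^{2}=\mu$ and $h_{\mu}$ has an apparent pole, which I would dispose of by proving the identity on the open dense subset of the Bowditch locus where no such coincidence occurs and then extending by continuity using the uniform decay estimates.
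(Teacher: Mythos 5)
Your setup (Markoff triples on the Farey tessellation, Bowditch's trace-growth estimates giving $h_{\mu}(x)=O(|x|^{-2})$ and hence absolute convergence) matches the paper, but the core mechanism you propose is both unproved and structurally flawed. First, $\Omega$ is in bijection with the \emph{vertices} of the Farey tessellation (equivalently, the complementary regions of the dual trivalent tree $\Sigma$), not with its edges; each $X\in\Omega$ is incident to infinitely many ideal triangles, not two. So the proposed exact splitting $h_{\mu}(x)=\phi_{\mu}(x;y,z)+\phi_{\mu}(x;y',z'')$ over ``the two triangles sharing $x$'' has no meaning for the objects the theorem sums over, and the telescoping built on it collapses. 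Second, even granting your combinatorics, the bookkeeping cannot produce $\frac{1}{2}$: summing the triple-wise constancy $\phi_{\mu}(x;y,z)+\phi_{\mu}(y;x,z)+\phi_{\mu}(z;x,y)=c_{\mu}$ over an exhaustion contributes $c_{\mu}\cdot\#\{\text{triangles}\}\to\infty$ unless $c_{\mu}=0$, and if $c_{\mu}=0$ and the boundary correction really tended to zero, you would conclude $\sum h_{\mu}=0$, contradicting the known case $\mu=0$. In fact the boundary sum is \emph{not} a vanishing error term; in the correct argument it is identically a nonzero constant and is precisely the source of the $\frac{1}{2}$. Finally, the existence and explicit form of the local weight is the crux of the theorem, not a verification to be deferred; you never exhibit it.

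For comparison, the paper attaches to each directed edge $\vec{e}\leftrightarrow\{X,Y;\to Z\}$ of $\Sigma$ the weight $\phi(\vec{e})=\Psi(x,y;z)$ of (\ref{eqn:phivece}) (well defined since $x\neq\pm\sqrt{\mu}$ under the Bowditch conditions by \cite[Lemma 3.10]{tan-wong-zhang2008advm}, so your density/continuity workaround for the apparent poles is unnecessary). These weights satisfy $\phi(\vec{e})+\phi(-\vec{e})=1$ and the vertex relation (\ref{eqn:phie3sum}); together these force the sum over \emph{every} circular set to equal $1$, independently of the subtree --- this exact flow identity replaces your telescoping and is where the constant comes from. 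The Bowditch conditions give a finite attracting subtree $\Sigma_0$ towards which all distant edges point, and the estimate $|\Psi(x,y;z)-h_{\mu}(x)|\le K|y|^{-2}$, with $z$ the small root (\ref{eqn:z}), shows that the circular-set sum at depth $n+1$ differs from $\sum_{X\in\Omega_n}2h_{\mu}(x)$ by at most a constant times $\sum_{Y\in\Omega_{n+1}\backslash\Omega_n}|y|^{-2}\to 0$; hence $2\sum_{X\in\Omega}h_{\mu}(x)=1$. So the relation between $h_{\mu}$ and the local weights is asymptotic, not the exact two-term splitting you posit, and without producing the weight (\ref{eqn:phivece}) (or an equivalent) your outline does not constitute a proof.
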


 Here and throughout the paper, we assume that, the square root function be taken as
 $\sqrt{\phantom{+}}: {\mathbf C} \rightarrow \{ z\in {\mathbf C} \mid \Re(z) \ge 0 \}$.
 Note that identity (\ref{eqn:main}) when $\mu=0$ reduces to Bowditch's extension of the original McShane identity
 (\ref{eqn:mcshane}).

 If we consider conjugacy classes of representations $\pi_1(T) \rightarrow \SLtwoC$ which are fixed by an Anosov mapping class $\theta$ of $T$,
 then the Bowditch conditions are not satisfied;
 but we can define the relative Bowditch conditions on $\Omega/\theta$, that is, if the corresponding conditions (i) and (ii)
 are satisfied by the elements of $\Omega/\theta$. As in \cite{bowditch1997t}, we have the following variant of Theorem \ref{thm:main}.

 \begin{thm}\label{thm:relmain}
 Let $\rho: \pi_1(T) \rightarrow \SLtwoC$ be an irreducible representation and let $\mu=\tau+2$ where $\tau \in \mathbf C$ is the peripheral trace of $\rho$.
 If $\rho$ is fixed up to conjugation by an Anosov mapping class $\theta$ of $T$ and it satisfies the Bowditch conditions relative to the action of $\theta$
 then
 \begin{equation}\label{eqn:relmain}
 \sum_{[X] \in \Omega/\theta} h_{\mu}({\rm tr}\,\rho(X)) = 0,
 \end{equation}
 where the infinite sum converges absolutely, with function $h_{\mu}$ defined by (\ref{eqn:hmu}).
 \end{thm}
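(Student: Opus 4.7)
The plan is to follow Bowditch's approach in \cite{bowditch1997t} and adapt the gap decomposition on the Farey dual tree that underlies Theorem \ref{thm:main} to the $\theta$-equivariant setting. Let $\Sigma$ be the trivalent tree dual to the Farey tessellation: its vertices correspond to Markoff triples $(x,y,z)$ arising from $\rho$ via the Markoff--Fricke equation $x^2+y^2+z^2-xyz=\mu$, its edges correspond to Markoff moves, and its ends are in bijection with $\Omega$. Since $\theta$ is Anosov, it acts on $\Sigma$ by a non-trivial translation along a bi-infinite axis $A \subset \Sigma$; the two ends of $A$ correspond to the $\theta$-invariant projective measured laminations on $T$ and do not lie in $\Omega$, so $\theta$ acts freely on $\Omega$ and $\Omega/\theta$ is in natural bijection with the set of ends of the infinite graph $\Sigma/\theta$. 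Absolute convergence of the series in \eqref{eqn:relmain} follows from the doubly exponential growth of $|\tr\,\rho(X)|$ along the ends of $\Sigma$ established in the proof of Theorem \ref{thm:main}, which descends to the quotient without loss thanks to the relative Bowditch conditions.

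The heart of the argument is the reuse of the gap function $\phi$ defined on directed edges of $\Sigma$ in the proof of Theorem \ref{thm:main}. Recall that $\phi$ has two key properties: (a) for each undirected edge $e$ labelled by a curve $X_e \in \Omega$, one has $\phi(\vec e)+\phi(\overleftarrow{e}) = h_\mu(\tr\,\rho(X_e))$; and (b) at each vertex the sum of outgoing gaps telescopes, so that summing over all edges in a finite subtree collapses to boundary terms supported on the edges exiting that subtree. Telescoping outward from a fixed root vertex produces the constant $1/2$ in Theorem \ref{thm:main}. In the Anosov setting I would instead sum $\phi$ over the directed edges of a fundamental domain $D \subset \Sigma$ for the $\theta$-action, with $D$ bounded by two arcs $P$ and $\theta P$ transverse to $A$, each meeting $A$ in a single edge. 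The interior telescoping produces exactly $\sum_{[X]\in\Omega/\theta} h_\mu(\tr\,\rho(X))$ plus boundary contributions along $P$ and $\theta P$. Because $\tr\,\rho(\theta_* X)=\tr\,\rho(X)$ for every $X \in \Omega$, the gap function $\phi$ is $\theta$-equivariant, so the flux of $\phi$ across $P$ coincides with the flux across $\theta P$; with the orientation convention forced on us by the choice of $D$, these two fluxes enter the telescoping sum with opposite signs and cancel exactly, leaving the value $0$ in \eqref{eqn:relmain}.

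The main obstacle I anticipate is making the boundary cancellation rigorous: one must verify that the two boundary fluxes carry opposite signs in the telescoping sum over $D$. This sign issue is the crucial difference between Theorems \ref{thm:main} and \ref{thm:relmain}, and it is precisely what converts the constant $1/2$ into $0$ on the right-hand side. A subsidiary technical point is that the exceptional orbits $[X]$ with $|\tr\,\rho(X)| \le 2$ sit in a compact portion of $\Sigma/\theta$ and contribute only finitely many terms to the sum, so the gap decomposition is well-defined on the whole quotient; this is ensured by the relative Bowditch conditions in the same way that the usual Bowditch conditions underpin Theorem \ref{thm:main}.
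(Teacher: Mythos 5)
Your overall route is the same one the paper takes: Theorem \ref{thm:relmain} is deduced from the branch identity (Theorem \ref{thm:mainbranch}) by Bowditch's argument in \cite{bowditch1997t}, i.e.\ by exploiting the $\theta$-invariance of the edge weights and comparing an edge of the axis with its $\theta$-translate, so that the ``boundary'' contributions cancel and the Euler characteristic $0$ of the quotient graph replaces the constant $\tfrac12$ by $0$. However, the mechanism you give for the interior telescoping rests on a false identity. In this setting the classes in $\Omega$ correspond to complementary regions of $\Sigma$ (not to ends of $\Sigma$, and not to edges), and the two orientations of an edge satisfy $\phi(\vec e)+\phi(-\vec e)=1$, equation \eqref{eqn:phie2sum}, not $\phi(\vec e)+\phi(-\vec e)=h_{\mu}({\rm tr}\,\rho(X_e))$. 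Consequently the sum of the edge weights over a fundamental domain does \emph{not} collapse exactly to $\sum_{[X]}h_{\mu}({\rm tr}\,\rho(X))$ plus two boundary fluxes; there is no exact local identity expressing $h_\mu$ as an edge-by-edge difference. What is true is the branch identity \eqref{eqn:mainbranch}, and its proof (like that of Theorem \ref{thm:main}) is a limiting argument over circular sets receding to infinity, using the estimate $|\Psi(x,y;z)-h_{\mu}(x)|\le K|y|^{-2}$ together with the vertex and edge relations \eqref{eqn:phie2sum}--\eqref{eqn:phiecircular}. In the relative setting this limit has to be taken in the quotient graph $\Sigma/\theta$ (or, equivalently, as the difference of the two branch sums at $\vec e$ and $\theta\vec e$ on the axis, organized as in \cite[\S 3]{bowditch1997t} or \cite[\S 5]{tan-wong-zhang2008advm}); note that Theorem \ref{thm:mainbranch} cannot be quoted verbatim at an edge of the axis, since the branch behind such an edge contains the $\theta$-periodic regions along the axis and the corresponding sum of $|h_\mu|$ diverges.

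A second gap is the convergence statement. You say the growth of $|{\rm tr}\,\rho(X)|$ established for Theorem \ref{thm:main} ``descends to the quotient without loss,'' but in the Anosov-invariant situation the full Bowditch conditions fail (traces are $\theta$-periodic along the axis, so $\{X:|{\rm tr}\,\rho(X)|\le 2\}$ need not be finite in $\Omega$), so you cannot invoke the convergence proved there. What you need, and what the relative Bowditch conditions supply via Bowditch's arguments, is the relative (Fibonacci-type) growth of traces with distance from the axis, which gives absolute convergence of $\sum_{[X]\in\Omega/\theta}|h_{\mu}({\rm tr}\,\rho(X))|$ and the error control needed in the limiting step above. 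With the false edge identity replaced by the branch identity and these relative growth estimates put in place, your fundamental-domain cancellation becomes exactly the paper's (i.e.\ Bowditch's) proof.
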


 There is an identification of $\Omega$ with $\mathbf Q \cup \{1/0\}$ by considering the slopes of $\mathbf Z$-homology classes of simple closed curves in $T$.
 Explicitly, given a free basis $\{a,b\}$ of $\pi_1(T)$, if the simple closed curves represented by (the conjugacy classes of) $a$, $b$ and $ab$ are
 assumed to have slopes $0/1$, $1/0$ and $1/1$, then the simple closed curve represented by $ab^{-1}$ has slope $-1/1$,
 and each essential simple closed curve in $T$ has a unique slope $s/r \in \mathbf Q \cup \{1/0\}$ where $r,s$ are coprime integers.

 Furthermore, there is a tri-coloring of $\Omega$ coming from the three non-trivial ${\mathbf Z}_2$-homology classes,
 or equivalently, from considering the identification of $\Omega$ with $\mathbf Q \cup \{1/0\}$, and the parity of $s/r \in \mathbf Q \cup \{1/0\}$,
 namely, when only $r$ is odd, or when only $s$ is odd, or when $r,s$ are both odd.
 Thus
 $$
 \Omega= \Omega_1 \sqcup \Omega_2 \sqcup \Omega_3.
 $$

 Theorem \ref{thm:main} is a special case (with $p_i=\frac{1}{3}$) of the following theorem.

 \begin{thm}\label{thm:mainpqr}
 Let $\rho: \pi_1(T) \rightarrow \SLtwoC$ be an irreducible representation and let $\mu=\tau+2$
 where $\tau \in {\mathbf C}$ is the peripheral trace of $\rho$.
 Choose arbitrary $p_1,p_2,p_3 \in {\mathbf C}$ such that $p_1+p_2+p_3=1$.
 If $\rho$ satisfies the Bowditch conditions then
 \begin{equation}\label{eqn:mainpqr}
 \sum_{X \in \Omega_1} h_{\mu,p_1}(x)+\sum_{X \in \Omega_2} h_{\mu,p_2}(x)+\sum_{X \in \Omega_3} h_{\mu,p_3}(x)= \frac{1}{2}, %
 \end{equation}
 where the three infinite sums converge absolutely,
 $x={\rm tr}\rho(X)$ and the function $h_{\mu,p}:{\mathbf C}\backslash\{[-2,2],\pm\sqrt{\mu}\} \rightarrow {\mathbf C}$ is defined by
 \begin{equation}
 h_{\mu,p}(x)=\frac{1}{2}\left(1- \frac{x^2-(1-p)\mu}{x^2-\mu} \sqrt{1-\frac{4}{x^2}}\right).
 \end{equation}
 \end{thm}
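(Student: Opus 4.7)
The plan is to reduce Theorem~\ref{thm:mainpqr} to Theorem~\ref{thm:main} plus a color-symmetric identity for an auxiliary function. A routine algebraic manipulation of the defining formula gives the decomposition
\[
h_{\mu,p}(x) \;=\; h_\mu(x) \;+\; \mu\bigl(\tfrac{1}{3}-p\bigr)\, f_\mu(x),
\qquad f_\mu(x) := \frac{1}{2(x^2-\mu)}\sqrt{1-\frac{4}{x^2}}.
\]
Plugging this into the left-hand side of (\ref{eqn:mainpqr}) and invoking Theorem~\ref{thm:main} to cancel the $h_\mu$ contribution, identity (\ref{eqn:mainpqr}) becomes
\[
\mu\sum_{i=1}^{3}\bigl(\tfrac{1}{3}-p_i\bigr)\, S_i \;=\; 0,
\qquad S_i := \sum_{X\in\Omega_i} f_\mu(\tr\rho(X)).
\]
Since this must hold for every $(p_1,p_2,p_3)$ with $p_1+p_2+p_3=1$ and the coefficients $\tfrac{1}{3}-p_i$ range over all triples summing to zero, the statement for $\mu\ne 0$ is \emph{equivalent} to the color-symmetry $S_1=S_2=S_3$. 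When $\mu=0$ the correction vanishes and the theorem reduces directly to Theorem~\ref{thm:main}. Absolute convergence of each $S_i$ follows from $f_\mu(x)=O(|x|^{-3})$ at infinity, which is better than the decay of $h_\mu$, so the same trace-growth estimates used for Theorem~\ref{thm:main} apply.

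The real content is thus the color-symmetric identity $S_1=S_2=S_3$. For this I would run a Bowditch-type tree-telescoping argument on the dual trivalent Farey tree $\Sigma$. Vertices of $\Sigma$ correspond to Markoff triples $(x_1,x_2,x_3)$ satisfying the Fricke relation $x_1^2+x_2^2+x_3^2-x_1x_2x_3=\mu$, and each edge is colored by the $\mathbf Z_2$-homology class of the corresponding simple closed curve; every vertex is incident to exactly one edge of each color, and a Markoff flip preserves the color of the flipped edge. Under the Bowditch hypothesis there is a finite \emph{attractor} subset of vertices outside of which traces grow super-polynomially along each branch. I would look for a \emph{symmetric} rational potential $\Psi(x_1,x_2,x_3)$ on the Markoff variety satisfying the telescoping relation
\[
f_\mu(x_i) \;=\; \Psi(x_1,x_2,x_3) - \Psi(x_1,\ldots,x_i',\ldots,x_3),
\qquad x_i' := x_jx_k - x_i,
\]
for each color $i$ and each Markoff triple. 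Organizing the color-$i$ edges via the attractor-rooted flow, each $S_i$ then collapses to the same finite boundary sum (same because $\Psi$ is symmetric in its three arguments), proving $S_1=S_2=S_3$.

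The principal obstacle is the explicit construction of the symmetric potential $\Psi$. I expect it to take the shape $\Psi(x_1,x_2,x_3)=P(x_1,x_2,x_3)/\prod_{i=1}^{3}(x_i^2-\mu)$ for a low-degree symmetric polynomial $P$, to be determined by clearing denominators in $f_\mu(x_i)-f_\mu(x_i')$ and simplifying using the Fricke relation. A further subtlety is that the color-$i$ edges form a perfect matching rather than a connected subtree of $\Sigma$, so one cannot telescope along a simple path; the telescoping must instead be organized via the flow toward the Bowditch attractor. Once $\Psi$ is identified, the convergence and boundary-analysis steps follow the same Tan--Wong--Zhang template used for Theorem~\ref{thm:main}, since the fixed branch of $\sqrt{\,\cdot\,}$ specified in the paper provides consistent signs throughout the tree.
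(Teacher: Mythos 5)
Your reduction step is fine: the decomposition $h_{\mu,p}(x)=h_\mu(x)+\mu\bigl(\tfrac13-p\bigr)f_\mu(x)$ with $f_\mu(x)=\frac{1}{2(x^2-\mu)}\sqrt{1-\frac{4}{x^2}}$ is correct, the convergence claim is fine, and combining with Theorem \ref{thm:main} the assertion for $\mu\neq 0$ is indeed equivalent to $S_1=S_2=S_3$. The genuine gap is in your proposed proof of that color symmetry. The telescoping relation you posit cannot hold for any function $\Psi$: the Markoff flip in the $i$-th coordinate is an involution, so applying $f_\mu(x_i)=\Psi(x_1,x_2,x_3)-\Psi(x_1,\dots,x_i',\dots,x_3)$ to the same edge in both directions forces $f_\mu(x_i)+f_\mu(x_i')=0$ on every edge of $\Sigma$, which is false; e.g.\ the triple $(3,3,4)$ satisfies the $\mu$-Markoff equation with $\mu=-2$, flips to $(3,3,5)$, and $f_{-2}(4)+f_{-2}(5)>0$ (more generally, for a hyperbolic structure with geodesic boundary all traces are real and $>2$ and $\mu<0$, so $f_\mu>0$ on all of $\Omega$). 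So the ``principal obstacle'' you flag is not an unfinished computation but an inconsistency: no vertex potential, symmetric or not, satisfies your functional equation, and any repair would have to pass to directed-edge weights in the style of Bowditch's $\phi(\vec{e})$ --- which is exactly what the paper's first proof does, by perturbing $\phi(\vec{e})$ to weighted values $\phi_{p_1,p_2,p_3}(\vec{e})$ that still satisfy $\phi_{p_1,p_2,p_3}(\vec{e})+\phi_{p_1,p_2,p_3}(-\vec{e})=1$ and the vertex relation, and then rerunning the circular-set argument of Theorem \ref{thm:main}.

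Moreover, the color symmetry you need has a much shorter proof, which is in essence the paper's second proof of Theorem \ref{thm:mainpqr}. Every vertex $v\leftrightarrow\{X,Y,Z\}$ of $\Sigma$ is adjacent to exactly one region of each of the three colors, and for a fixed region $X$ the traces of its neighbors are $y_n=A\lambda^n+B\lambda^{-n}$, so Proposition \ref{prop:sums} gives the closed form $\sum_{\{X,Y,Z\}\leftrightarrow v\in V_X}\frac{\mu}{xyz}=\frac{\mu}{x^2-\mu}\sqrt{1-\frac{4}{x^2}}=2\mu f_\mu(x)$. Summing over $X\in\Omega_i$ and rearranging (legitimate by absolute convergence of the vertex sum, which follows from the same trace-growth estimates you cite) yields $2\mu S_i=\sum_{v\in V(\Sigma)}\frac{\mu}{xyz}$ for each $i=1,2,3$, hence $S_1=S_2=S_3$ at once. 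With this (or with the paper's weighted edge values) your argument closes; as written, the key lemma is unproved and the proposed mechanism for it cannot work.
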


 \noindent {\bf Remark.} Most of the results here generalize to an $n$-variable setting where we consider the polynomial automorphisms
 of ${\mathbf C}^n$ which preserve the Markoff-Hurwitz equation, see \cite{hu-tan-zhang2} or \cite{hu2013thesis} for details.
 We also note here that while the identities in \cite{tan-wong-zhang2008advm} were geometrically motivated, these are not,
 hence it would be interesting to find some geometric interpretations for these new identities.

 \medskip

 The rest of the paper is organized as follows.
 In \S\ref{s:prelim} we describe the combinatorial setting and formulation introduced by Bowditch in \cite{bowditch1998plms}
 and state a branch version of Theorem \ref{thm:main}.
 In \S \ref{s:proofs} we prove the main theorem, and indicate proofs of the other results.
 We also give a second proof of Theorem \ref{thm:mainpqr}, which explains how the result was originally motivated.
 In \S \ref{s:appendix} we give two summation identities used in the second proof of Theorem \ref{thm:mainpqr}.

 \medskip

 {\it Acknowledgements.} We would like to thank Martin Bridgeman, Dick Canary, Bill Goldman, Fran\c{c}ois Labourie, Makoto Sakuma  and
 Weiping Zhang for helpful conversations and comments.

 \section{Preliminary setting and formulations}\label{s:prelim}

 In this section we describe the setting and formulation introduced by Bowditch in \cite{bowditch1998plms} (and used in \cite{tan-wong-zhang2008advm})
 on the combinatorial structure of the set $\Omega$ of isotopy classes of essential simple closed curves in a once punctured torus $T$.

 The curve complex ${\mathscr C}(T)$ of $T$ is a simplicial complex defined as follows:
 the set of vertices ($0$-simplices) is $\Omega$; there is an edge ($1$-simplex) with vertex set $\{X,Y\}$
 if and only if the two simple closed curves representing $X$ and $Y$ have minimal geometric intersection number one;
 and there is a $k$-simplex with vertex set $\{X_0,X_1,\cdots,X_k\}$ if and only if each pair of the vertices is connected by an edge.

 The simplicial complex ${\mathscr C}(T)$ is $2$-dimensional.
 Actually, ${\mathscr C}(T)$ has a nice geometric realization in the upper half-plane model of the hyperbolic plane $\HH$ via the Farey triangulation:
 the set of vertices is ${\mathbf Q}\cup\{1/0\}$, the set of rational slopes (including $1/0$);
 the edges of ${\mathscr C}(T)$ are realized as hyperbolic lines joining Farey neighbors;
 and the $2$-simplices are the ideal triangles of the Farey triangulation.

 \vskip 5pt

 \textbf{Trivalent tree $\Sigma$.} The dual graph $\Sigma$ of ${\mathscr C}(T)$ is a trivalent tree embedded in the underlying space $|{\mathscr C}(T)|$.
 Geometrically, $\Sigma$ can be realized as a trivalent tree properly embedded in $\HH$:
 the set of vertices, $V(\Sigma)$, consists of the geometric centers of the rational ideal triangles,
 and the set of edges, $E(\Sigma)$, consist of geodesic segments.
 A complementary region of $\Sigma$ is the closure of a connected component of the complement of $\Sigma$ in $\HH$.
 The set of complementary regions of $\Sigma$ in $\HH$ has a natural bijection with $\mathbf{Q}\cup \{1/0\}$ which is identified with $\Omega$.
 We thus identify the two sets and also use $\Omega$ to denote the set of complementary regions of $\Sigma$ in $\HH$.

 For every vertex $v \in V(\Sigma)$, we use the identification $v \leftrightarrow \{X,Y,Z\}$ to indicate the
 three complementary regions $X,Y,Z \in \Omega$ around $v$, that is, $v=X \cap Y \cap Z$.
 For every edge $e \in E(\Sigma)$, we use the notation $e \leftrightarrow \{X,Y;Z,W\}$ to mean that
 $e=X \cap Y$ and $e \cap Z$ and $e \cap W$ are endpoints of $e$; we also use $\Omega^0(e):=\{X,Y\}$.

 \vskip 5pt

 \textbf{Directed edges of $\Sigma$ and circular sets.}
 Denote by $\vec{E}(\Sigma)$ the set of all directed edges of $\Sigma$.
 Each edge $e \in E(\Sigma)$ corresponds to a pair of oppositely directed edges $\vec{e}, -\vec{e} \in \vec{E}(\Sigma)$.
 For a directed edge $\vec{e} \in \vec{E}(\Sigma)$, we use the notation $\vec{e} \leftrightarrow \{X,Y;Z \to W\}$,
 or just $\{X,Y;\to W\}$ for simplicity, to indicate that $e \leftrightarrow \{X,Y;Z,W\}$ and the
 direction of $\vec{e}$ heads towards $W$. Thus we have $-\vec{e}\leftrightarrow \{X,Y;W\to Z\}$.

 Let $\Omega^+(\vec e)$ (resp., $\Omega^-(\vec e)$) be the subset of $\Omega$ consisting of those complementary regions of $\Sigma$
 whose boundaries are completely contained in the subtree of $\Sigma$ obtained by removing $e$ from $\Sigma$ that contains the head (resp., tail) of $\vec e$.
 Thus we have, for every $\vec e \in {\vec E}(\Sigma)$, $\Omega =\Omega^-(\vec e) \sqcup \Omega^0(e) \sqcup \Omega^+(\vec e)$.

 Given a finite subtree $\Sigma'$ of $\Sigma$, the {\it circular set} $C(\Sigma')\subset\vec{E}(\Sigma)$ is defined
 to be the set of all $\vec{e} \in \vec{E}(\Sigma)$ such that $e \cap \Sigma'$ consists of only the head endpoint of $\vec{e}$.

 \vskip 5pt

 \textbf{$\SLtwoC$-characters and Markoff maps.}
 Given a representation $\rho:\pi_1(T)\rightarrow \SLtwoC$, its trace function ${\rm tr}\rho: \pi_1(T)\rightarrow {\mathbf C}$
 is called an $\SLtwoC$-{\it character} of the once punctured torus group.
 Furthermore, ${\rm tr}\rho$ regarded as defined on $\Omega$ is called a $\mu$-{\it Markoff map}, where $\mu=\tau+2$
 with $\tau \in \mathbf C$ the peripheral trace of $\rho$.

 Equivalently, a $\mu$-Markoff map is a function $\phi: \Omega \to {\mathbf C}$
 satisfying the following vertex and edge relations (writing $x:=\phi(X)$, $y:=\phi(Y)$, etc.):

 (i) for every vertex $v \leftrightarrow \{X, Y, Z\}$, $(x,y,z)$ satisfy the $\mu$-Markoff equation
 \begin{equation}\label{eqn:mumarkoff}
 x^2 +y^2 + z^2 - xyz = \mu;
 \end{equation}

 (ii) for every edge $e \leftrightarrow \{X,Y;Z,W\}$, $(x,y,z,w)$ satisfy the edge equation
 \begin{equation}\label{eqn:edge}
 xy = z + w.
 \end{equation}

 \textbf{Edge direction induced by a Markoff map.}
 There is a natural direction on edges of $\Sigma$ determined by a Markoff map $\phi$ as follows:
 for $e \leftrightarrow \{X,Y;Z,W\}$,
 if $|\phi(W)| > |\phi(Z)|$, we choose $\vec{e} \leftrightarrow \{X,Y;W \to Z \}$;
 if $|\phi(W)| < |\phi(Z)|$, we choose $\vec{e} \leftrightarrow \{X,Y;Z \to W \}$;
 if $|\phi(W)| = |\phi(Z)|$, then choose the direction arbitrarily.

 \vskip 5pt

 \textbf{Edge value defined by a Markoff map.}
 For a directed edge $\vec{e} \leftrightarrow \{X,Y; \to Z\}$, writing $x=\phi(X)$, $y=\phi(Y)$, $z=\phi(Z)$,
 we define an edge value $\phi(\vec{e})$ by
 \begin{equation}\label{eqn:phivece}
 \phi(\vec{e}) =\Psi(x,y;z)= \frac{z}{xy} - \frac{1}{3} \left(\frac{1}{2} - \frac{z}{xy}\right)\left(\frac{\mu}{x^2-\mu} + \frac{\mu}{y^2-\mu}\right).
 \end{equation}
 When the $\mu$-Markoff map $\phi$ satisfies the Bowditch conditions, $\phi(\vec{e})$ is well defined
 since we have $x,y,z \neq 0$ and (by \cite[Lemma 3.10]{tan-wong-zhang2008advm}) $x,y,z \neq \pm \sqrt{\mu}$.
 By the edge and vertex relations, we have, for any directed edge $\vec{e}\in {\vec E}(\Sigma)$,
 \begin{equation}\label{eqn:phie2sum}
 \phi(\vec{e}) + \phi(-\vec{e})= 1,
 \end{equation}
 and, for any vertex $v \in V(\Sigma)$,
 \begin{equation}\label{eqn:phie3sum}
 \phi(\vec{e_1}) + \phi(\vec{e_2}) + \phi(\vec{e_3}) = 1,
 \end{equation}
 where $\vec{e}_1,\vec{e}_2,\vec{e}_3$ are the three directed edges heading towards $v$.
 Combining relations (\ref{eqn:phie2sum}) and (\ref{eqn:phie3sum}), we have, for any circular set $C=C(\Sigma')$,
 \begin{equation}\label{eqn:phiecircular}
 \sum_{\vec e \in C}\phi(\vec e)=1.
 \end{equation}

 There is also the following branch version of the main theorem.

 \begin{thm}\label{thm:mainbranch}
 Under the same assumptions of Theorem \ref{thm:main} and with the same notation, we have, for every directed edge $\vec{e} \in \vec{E}(\Sigma)$,
 \begin{equation}\label{eqn:mainbranch}
 \sum_{X\in\Omega^0(e)} h_{\mu}(x) + \sum_{X\in\Omega^-(\vec{e})} 2h_{\mu}(x) = \phi(\vec{e}),
 \end{equation}
 where the infinite sum converges absolutely, with edge value $\phi(\vec{e})$ defined by (\ref{eqn:phivece}).
 \end{thm}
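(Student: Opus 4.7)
The plan is to prove the branch identity recursively: express both sides of (\ref{eqn:mainbranch}) as additive functions on directed edges satisfying the same decomposition rule at the tail vertex of $\vec e$, and then pass to a limit using trace-growth estimates.

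For absolute convergence, the Bowditch conditions (i) and (ii) via the $\mu$-Markoff map machinery in \S\ref{s:prelim} and \cite{bowditch1998plms,tan-wong-zhang2008advm} imply that $\{X\in\Omega:|\phi(X)|\le K\}$ is finite for every $K$ and that traces grow geometrically along outgoing branches outside a finite core subtree of $\Sigma$. Since $h_\mu(x)=O(|x|^{-2})$ as $|x|\to\infty$, the series $\sum_{X\in\Omega^-(\vec e)}|h_\mu(x)|$ converges absolutely, justifying all rearrangements.

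Next, I set up the recursive step at the tail vertex. Let $u$ be the tail vertex of $\vec e$, write $u\leftrightarrow\{X,Y,W\}$ with $\Omega^0(e)=\{X,Y\}$, and let $\vec a,\vec b$ be the other two edges at $u$ directed with heads at $u$ (along $e_{XW}$ and $e_{YW}$ respectively). Applying (\ref{eqn:phiecircular}) to the circular set $C(\{u\})=\{-\vec e,\vec a,\vec b\}$ together with (\ref{eqn:phie2sum}) gives $\phi(\vec e)=\phi(\vec a)+\phi(\vec b)$. Denoting the left-hand side of (\ref{eqn:mainbranch}) by $F(\vec e)$, the disjoint decomposition $\Omega^-(\vec e)=\{W\}\sqcup\Omega^-(\vec a)\sqcup\Omega^-(\vec b)$---valid because the bi-infinite boundary path in $\Sigma$ of any region in $\Omega^-(\vec e)\setminus\{W\}$ avoids $u$ and hence lies entirely on one side of $u$---yields $F(\vec e)=F(\vec a)+F(\vec b)$ after regrouping. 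So $G:=F-\phi$ satisfies the same recursion, and iterating $n$ times gives $G(\vec e)=\sum_{\vec f\in L_n(\vec e)}G(\vec f)$, where $L_n(\vec e)$ is the set of depth-$n$ tail-side descendants of $\vec e$.

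Finally, to conclude $G(\vec e)=0$ I take $n\to\infty$. For $n$ sufficiently large, every $\vec f\in L_n(\vec e)$ lies outside the finite Bowditch core on some outgoing branch; the traces of regions in $\Omega^0(f)\cup\Omega^-(\vec f)$ are then all large and grow geometrically in $n$. Quantitative estimates adapted from \cite{bowditch1998plms,tan-wong-zhang2008advm} yield a uniform bound $|G(\vec f)|=O(\lambda^{-n})$ for some $\lambda>2$; combined with $|L_n(\vec e)|\le 2^n$, this forces $\sum_{\vec f\in L_n}|G(\vec f)|\to 0$, so $G(\vec e)=0$. The main obstacle is this last step: ensuring the per-edge decay with $\lambda>2$ genuinely beats the $2^n$-branching of $L_n$, which requires transporting Bowditch's trace-growth estimates to the $\mu$-Markoff setting with care near the poles of $h_\mu$ at $\pm\sqrt\mu$---the finitely many exceptional vertices being handled via condition (ii) of the Bowditch conditions.
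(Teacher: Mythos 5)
Your set-up is sound and is essentially the branch analogue of the paper's argument for Theorem \ref{thm:main} (whose proof the paper simply reuses for Theorem \ref{thm:mainbranch}): the absolute convergence, the disjoint decomposition $\Omega^-(\vec e)=\{W\}\sqcup\Omega^-(\vec a)\sqcup\Omega^-(\vec b)$, and the recursions $F(\vec e)=F(\vec a)+F(\vec b)$ and $\phi(\vec e)=\phi(\vec a)+\phi(\vec b)$ (the latter is just (\ref{eqn:phie2sum}) and (\ref{eqn:phie3sum}); iterating it is the branch version of the circular-set relation (\ref{eqn:phiecircular})). The genuine gap is in your limiting step. You justify $|G(\vec f)|=O(\lambda^{-n})$ by asserting that for $\vec f\in L_n$ with $n$ large the traces of all regions in $\Omega^0(f)\cup\Omega^-(\vec f)$ are large and grow with $n$. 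That is false for one of the two regions in $\Omega^0(f)$: at each level exactly one adjacent region is newly introduced (its trace is indeed huge), but the other is a persistent region whose boundary path descends that branch forever --- for example, each of the three regions meeting the tail vertex of $\vec e$ is adjacent to some edge of $L_n$ for every $n$ --- and its trace is a fixed constant, possibly of modulus at most $2$. For such $\vec f$, both $F(\vec f)$ (through the term $h_\mu$ of the persistent region) and $\phi(\vec f)$ are of order one, and their difference is small only because of a cancellation: this is exactly the paper's key estimate $|\Psi(x,y;z)-h_\mu(x)|\le K|y|^{-2}$, where $Y$ is the new region and $z$ is the root given by (\ref{eqn:z}) (which in turn requires checking that for deep $\vec f$ your chosen direction agrees with the $\phi$-induced direction toward the finite attracting subtree). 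This comparison is the analytic heart of the proof and is absent from your argument; without it no per-edge bound can be extracted, however fast the traces grow.

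Once that estimate is invoked, the $\lambda>2$ versus $2^n$ race you worry about disappears and is in any case the wrong bookkeeping: no per-edge exponential rate is needed. Since the sets $\Omega^-(\vec f)$, $\vec f\in L_n$, are pairwise disjoint and each region is adjacent to at most two edges of $L_n$, one gets $\sum_{\vec f\in L_n}|G(\vec f)|\le C\sum_V|\phi(V)|^{-2}$, the sum running over regions first appearing at depth at least about $n$ on the tail side of $\vec e$; this is a tail of the convergent series $\sum_{X\in\Omega}|\phi(X)|^{-2}$ and hence tends to $0$. This is precisely how the paper finishes the proof of Theorem \ref{thm:main}, and hence of Theorem \ref{thm:mainbranch}; if you repair your last step along these lines, your recursion argument becomes a correct (and essentially identical) rendering of it.
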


 \section{{\bf Proofs of the theorems}}\label{s:proofs}

 \begin{proof}[Proof of Theorem \ref{thm:main}]
 For the $\mu$-Markoff map $\phi={\rm tr}\rho: \Omega \rightarrow \mathbf C$, we need to show
 \begin{equation}\label{eqn:newidmarkoff}
 \sum_{X \in \Omega} h_{\mu}(\phi(X)) = \frac{1}{2},
 \end{equation}
 where the infinite sum converges absolutely. By arguments of Bowditch in \cite{bowditch1998plms},
 it is verified in \cite{tan-wong-zhang2008advm} that
 the infinite sum $\sum_{X \in \Omega} |\phi(X)|^{-t}$ converges for all $t > 0$.
 Since $|h_{\mu}(x)| = O(|x|^{-2})$ as $|x| \to +\infty$, the infinite sum in \eqref{eqn:newidmarkoff} converges absolutely.
 It remains to show that the infinite sum is equal to $\frac12$.

 By a similar calculation as in \cite{tan-wong-zhang2008advm}, there is a constant $K=K(\phi)>0$ so that
 \[
 |\Psi(x,y;z)-h_{\mu}(x)| \le K |y|^{-2}
 \]
 for all vertices $v \leftrightarrow \{X,Y,Z\}$ sufficiently far away from a fixed vertex $v_0$ and $z$ is given by
 \begin{equation}\label{eqn:z}
 z = \frac{xy}{2} \left(1- \sqrt{1-4\left(\frac{1}{x^2} + \frac{1}{y^2} - \frac{\mu}{x^2 y^2}\right) }\right).
 \end{equation}

 By arguments in \cite{bowditch1998plms} (as verified in \cite{tan-wong-zhang2008advm} for general $\mu \in {\mathbf C}$),
 there is a finite subtree $\Sigma_0$ of $\Sigma$ such that for edges not in $\Sigma_0$, their $\phi$-induced directions are all directed towards $\Sigma_0$.
 Let $C_n$ be the set of directed edges at a distance $n$ way from $\Sigma_0$.
 Thus $C_n$ is a circular set, and we have $\sum_{\vec{e} \in C_n} \phi(\vec{e}) = 1$.

 Let $\Omega_n$ denote the subset of $\Omega$ such that $X \in \Omega_n$ if and only if $X \in \Omega^0(e)$ for some $\vec{e}\in C_n$.
 Then $\Omega_{n} \subset \Omega_{n+1}$ and $\Omega=\bigcup_{n=0}^{\infty} \Omega_n$. It suffices to show that
 \[
 \lim_{n \to \infty} \sum_{X\in \Omega_n} h_{\mu}(x) = \frac{1}{2}.
 \]
 In fact, for sufficiently large $n$ and each $\vec{e} \in C_{n+1}$, if $\vec{e} \leftrightarrow \{X,Y;\to Z\}$ with $X,Z \in \Omega_n$ and $Y \in \Omega_{n+1} \backslash \Omega_n$, then $z$ is given by (\ref{eqn:z}). Thus we have
 \begin{align*}
 \left| \sum_{X\in \Omega_n} 2h_{\mu}(x) - 1 \,\right|
 &=\left|\sum_{X\in \Omega_n} 2h_{\mu}(x) - \sum_{\vec{e} \in C_{n+1}} \phi(\vec{e})\right| \\
 &\le \sum_{\vec{e} \in C_{n+1}} |h_{\mu}(x) - \phi(\vec{e})| \\
&\hbox{(where $\vec e \leftrightarrow (X,Y; \rightarrow Z)$ with $X,Z \in \Omega_n, Y \in \Omega_{n+1} \setminus \Omega_n$)}\\
 &\le \sum_{Y \in \Omega_{n+1} \backslash \Omega_n} \text{constant}\cdot |y|^{-2} \to 0
 \end{align*}
 as $n \to \infty$. This proves Theorem \ref{thm:main}.
 \end{proof}


 \begin{proof}[First proof of Theorem \ref{thm:mainpqr}]
 We use similar notation as that used in the proof of Theorem \ref{thm:main}.
 The convergence of the infinite sums in (\ref{eqn:mainpqr}) follows as before, and we need to prove the equality in (\ref{eqn:mainpqr}).
 The idea is to modify the value $\phi(\vec{e})$ for $\vec{e} \in \vec E(\Sigma)$ to a weighted one, $\phi_{p_1,p_2,p_3}(\vec{e})$,
 according to the induced tri-coloring on $\vec E(\Sigma)$ so that they still satisfy the relations (\ref{eqn:phie2sum}) and (\ref{eqn:phie3sum}).

 For any vertex $v \leftrightarrow \{X,Y,Z\}$ of $\Sigma$, where $X \in \Omega_1$, $Y \in \Omega_2$, $Z \in \Omega_3$,
 we denote $\vec{e}_1 \leftrightarrow \{Y,Z;\to X\}$, $\vec{e}_2 \leftrightarrow \{Z,X;\to Y\}$, $\vec{e}_3 \leftrightarrow \{X,Y;\to Z\}$
 and define weighted edge values $\phi_{p_1,p_2,p_3}(\vec{e}_i)$, $i=1,2,3$ by
 \begin{eqnarray}
 \phi_{p_1,p_2,p_3}(\vec{e}_1)&=&\frac{x}{yz}-\left(\frac{1}{2} - \frac{x}{yz}\right)\left(p_2\frac{\mu}{y^2-\mu} + p_3\frac{\mu}{z^2-\mu}\right), \\
 \phi_{p_1,p_2,p_3}(\vec{e}_2)&=&\frac{y}{zx}-\left(\frac{1}{2} - \frac{y}{zx}\right)\left(p_3\frac{\mu}{z^2-\mu} + p_1\frac{\mu}{x^2-\mu}\right), \\
 \phi_{p_1,p_2,p_3}(\vec{e}_3)&=&\frac{z}{xy}-\left(\frac{1}{2} - \frac{z}{xy}\right)\left(p_1\frac{\mu}{x^2-\mu} + p_2\frac{\mu}{y^2-\mu}\right).
 \end{eqnarray}
 It can be checked that 
 \[
 \phi_{p_1,p_2,p_3}(\vec{e_1}) +  \phi_{p_1,p_2,p_3}(\vec{e_2}) +  \phi_{p_1,p_2,p_3}(\vec{e_3}) = 1,
 \]
 and for every directed edge $\vec{e} \in \vec{E}(\Sigma)$,
 \[
 \phi_{p_1,p_2,p_3}(\vec{e}) +  \phi_{p_1,p_2,p_3}(-\vec{e})= 1.
 \]
 The rest of the proof follows similar lines as those in the proof of Theorem \ref{thm:main}.
 \end{proof}


 \begin{proof}[Proof of Theorem \ref{thm:mainbranch}]
 The proof is essentially the same as that of Theorem \ref{thm:main}.
 \end{proof}


 \begin{proof}[Proof of Theorem \ref{thm:relmain}]
 We may deduce Theorem \ref{thm:relmain} from Theorem \ref{thm:mainbranch} with exactly the same arguments as those used
 in \cite[\S 3]{bowditch1997t} or \cite[\S 5]{tan-wong-zhang2008advm}.
 \end{proof}


 \begin{proof}[Second proof of Theorem \ref{thm:mainpqr}]
 With similar notation as that used in the first proof of Theorem \ref{thm:mainpqr},
 let us write, for a directed edge $\vec{e}\leftrightarrow\{X,Y;\rightarrow Z\}$,
 \begin{equation}
 \phi_0(\vec{e})=\frac{z}{xy}.
 \end{equation}
 For each integer $n \ge 0$, let $\Sigma_n \subset \Sigma$ be the subtree whose vertex set consists of vertices
 at a distance not exceeding $n$ from a fixed vertex $v_0$ of $\Sigma$,
 and let $C_n = C(\Sigma_n) \subset \vec{E}(\Sigma)$ be the circular set of directed edges determined by the finite subtree $\Sigma_n$.
 From the vertex and edge relations for the $\mu$-Markoff map $\phi$ we obtain
 \begin{equation}
 \sum_{\vec{e}\in C_n} \phi_0(\vec{e}) - \sum_{\{X,Y,Z\}\leftrightarrow v\in V(\Sigma_n)} \frac{\mu}{xyz}= 1
 \end{equation}
 for all $n \ge 0$. Passing to limits as $n\rightarrow\infty$, we get
 \begin{equation}
 \lim_{n\rightarrow\infty}\sum_{\vec{e}\in C_n} \phi_0(\vec{e}) - \sum_{\{X,Y,Z\}\leftrightarrow v\in V(\Sigma)} \frac{\mu}{xyz}= 1.
 \end{equation}
 By arguments of Bowditch in \cite{bowditch1998plms}, we have
 \begin{equation}
 \lim_{n\rightarrow\infty}\sum_{\vec{e}\in C_n} \phi_0(\vec{e}) = \sum_{X \in \Omega} h_0(x).
 \end{equation}
 On the other hand, for each $v\in V(\Sigma)$ with $v \leftrightarrow \{X_1,X_2,X_3\}$, where $X_i \in \Omega_i$, $i=1,2,3$,
 we may distribute $\frac{\mu}{x_1x_2x_3}$ to the three regions $X_1,X_2,X_3$ according to the weights $p_1, p_2, p_3$ respectively.
 Furthermore, for any $X \in \Omega$ let $V_X \subset V(\Sigma)$ denote the set of vertices adjacent to $X$.
 Then we have
\begin{equation}
 \sum_{\{X,Y,Z\}\leftrightarrow v\in V(\Sigma)} \frac{\mu}{xyz}
  =\sum_{i=1}^{3} p_i\sum_{X\in\Omega_i}\sum_{\{X,Y,Z\}\leftrightarrow v\in V_X} \frac{\mu}{xyz}. \label{eqn:pi}
 \end{equation}
 If a region $X \in \Omega$ corresponds to $a\in \pi_1(T)$, then all the neighboring regions of $X$ are exactly those regions
 $Y_n \in \Omega$, $n \in \mathbf Z$ corresponding to $a^nb \in \pi_1(T)$ where $\{a,b\}$ is free basis of $\pi_1(T)$.
 Thus the values $y_n = \phi(Y_n)$, $n \in \mathbf Z$ are given by the formulas $y_n=A\lambda^n+B\lambda^{-n}$ for some
 $\lambda, A,B \in \mathbf C$ where $\lambda+\lambda^{-1}=x$, $|\lambda|>1$. By the vertex relation, we have
 \begin{equation}\label{eqn:AB}
 AB=\frac{x^2-\mu}{x^2-4}.
 \end{equation}
 Solving for $\lambda$ gives $\lambda=\frac{x}{2}\big(1+\sqrt{1-\frac{4}{x^2}}\big)$, $\lambda^{-1}=\frac{x}{2}\big(1-\sqrt{1-\frac{4}{x^2}}\big)$, and
 \begin{equation}\label{eqn:lambda}
 \lambda-\lambda^{-1} = x\sqrt{1-\frac{4}{x^2}}.
 \end{equation}
 By identity (\ref{eqn:sum}) in Proposition \ref{prop:sums}, we have, for a fixed $X \in \Omega$,
 \begin{equation}
 \sum_{\{X,Y,Z\}\leftrightarrow v\in V_X} \frac{\mu}{xyz} = \frac{\mu}{x^2-\mu}\sqrt{1-\frac{4}{x^2}}.
 \end{equation}
 It follows from (\ref{eqn:pi}) that
 \begin{equation}
 \sum_{X \in \Omega} h_0(x)  - \sum_{i=1}^{3}\sum_{X\in\Omega_{i}} \frac{p_i\mu}{x^2-\mu} \sqrt{1-\frac{4}{x^2}} = 1,
 \end{equation}
 which can be rewritten as (\ref{eqn:mainpqr}). This proves Theorem \ref{thm:mainpqr}.
 \end{proof}

 \noindent {\bf Remark.}
 The theorems in this paper also hold if we relax the Bowditch conditions by allowing a finite number of $X \in \Omega$ with ${\rm tr}\rho(X)=\pm 2$,
 that is, if we replace the Bowditch condition (i) by (i$'$) ${\rm tr}\rho(X) \not\in (-2,2)\subset{\mathbf R}$ for all $X \in \Omega$.
 The verification is essentially the same as what was done in \cite{tan-wong-zhang2006gd} for the results therein.

 \section{Appendix: Two summation identities}\label{s:appendix}

 In this appendix we prove two identities, (\ref{eqn:sum0}) and (\ref{eqn:sum}) below, concerning summations of
 certain sequences of complex numbers.

 \begin{prop}\label{prop:sums}
 Given $\lambda, A, B \in \mathbf C$ with $|\lambda|>1$ and $A,B \neq 0$, let
 $$ y_n=A\lambda^{n}+B\lambda^{-n} $$
 for $n \in \mathbf Z$.
 Then
 \begin{eqnarray}
 \sum_{n=0}^{+\infty} \frac{1}{y_ny_{n+1}}&=&\frac{1}{A(A+B)(\lambda-\lambda^{-1})}, \label{eqn:sum0} \\
 \sum_{n=-\infty}^{+\infty}\frac{1}{y_ny_{n+1}}&=&\frac{1}{AB(\lambda-\lambda^{-1})}. \label{eqn:sum}
 \end{eqnarray}
 \end{prop}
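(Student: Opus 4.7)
The plan is to prove both identities by a telescoping argument, exploiting the linear recurrence underlying $y_n=A\lambda^n+B\lambda^{-n}$. Concretely, I would look for a sequence $\alpha_n$ satisfying $\alpha_n y_{n+1}-\alpha_{n+1}y_n=1$, since such a sequence yields
\[
\frac{1}{y_n y_{n+1}} = \frac{\alpha_n}{y_n}-\frac{\alpha_{n+1}}{y_{n+1}},
\]
which telescopes. The natural ansatz $\alpha_n=c\lambda^n$ gives $\alpha_n y_{n+1}-\alpha_{n+1}y_n = cB(\lambda^{-1}-\lambda)$, forcing $c=-1/[B(\lambda-\lambda^{-1})]$; the dual ansatz $\alpha_n=c'\lambda^{-n}$ gives $c'=1/[A(\lambda-\lambda^{-1})]$. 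Hence I obtain the two partial-fraction identities
\[
\frac{1}{y_ny_{n+1}}=-\frac{1}{B(\lambda-\lambda^{-1})}\left(\frac{\lambda^n}{y_n}-\frac{\lambda^{n+1}}{y_{n+1}}\right)=\frac{1}{A(\lambda-\lambda^{-1})}\left(\frac{\lambda^{-n}}{y_n}-\frac{\lambda^{-n-1}}{y_{n+1}}\right).
\]

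The next ingredient is the boundary limits. Because $|\lambda|>1$, the expressions $\lambda^n/y_n=1/(A+B\lambda^{-2n})$ and $\lambda^{-n}/y_n=1/(A\lambda^{2n}+B)$ tend to $1/A$ as $n\to+\infty$ and to $1/B$ as $n\to-\infty$ respectively, and both bounds are geometric in $n$, so absolute convergence of the sums is automatic. For identity (\ref{eqn:sum0}) I would telescope the first decomposition from $n=0$ to $n=N$ and let $N\to\infty$, using $y_0=A+B$, to get
\[
\sum_{n=0}^{+\infty}\frac{1}{y_ny_{n+1}}=-\frac{1}{B(\lambda-\lambda^{-1})}\left(\frac{1}{A+B}-\frac{1}{A}\right)=\frac{1}{A(A+B)(\lambda-\lambda^{-1})}.
\]

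For identity (\ref{eqn:sum}) I would split the bilateral sum at $n=0$. The tail $n\ge0$ has just been computed; for the tail $n\le -1$ I apply the second decomposition and telescope from $n=-N$ to $n=-1$, and the limit $\lambda^{-n}/y_n\to 1/B$ as $n\to-\infty$ produces
\[
\sum_{n=-\infty}^{-1}\frac{1}{y_ny_{n+1}}=\frac{1}{A(\lambda-\lambda^{-1})}\left(\frac{1}{B}-\frac{1}{A+B}\right)=\frac{1}{B(A+B)(\lambda-\lambda^{-1})}.
\]
Summing the two tails collapses the factor $A+B$ via $\frac{1}{A}+\frac{1}{B}=\frac{A+B}{AB}$, leaving $1/[AB(\lambda-\lambda^{-1})]$, as required. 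There is no serious obstacle in this proof: it is a pure partial-fraction manipulation. The only mildly creative step is spotting that both $\alpha_n=\lambda^n$ and $\alpha_n=\lambda^{-n}$ produce constant (hence telescoping) values of $\alpha_n y_{n+1}-\alpha_{n+1}y_n$, and that one needs the first for the one-sided identity but both for the bilateral one.
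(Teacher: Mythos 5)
Your proof is correct and follows essentially the same route as the paper: a partial-fraction decomposition of $1/(y_ny_{n+1})$ that telescopes, with the $|\lambda|>1$ hypothesis controlling the boundary terms (your second decomposition is exactly the paper's identity $\frac{1}{y_ny_{n+1}}=\frac{1}{A(\lambda-\lambda^{-1})}\bigl(\frac{1}{A\lambda^{2n}+B}-\frac{1}{A\lambda^{2n+2}+B}\bigr)$). The only cosmetic difference is that the paper evaluates the $n\le -1$ tail by reindexing and invoking the $A\leftrightarrow B$ symmetry of the one-sided result, whereas you telescope that tail directly with your first decomposition.
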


 \begin{proof}
 We first prove (\ref{eqn:sum0}). Indeed,
 \begin{eqnarray*}
 \sum_{n=0}^{+\infty} \frac{1}{y_ny_{n+1}}
 &=& \sum_{n=0}^{+\infty} \frac{\lambda^{2n+1}}{(A\lambda^{2n+1}+B\lambda)(A\lambda^{2n+1}+B\lambda^{-1})} \\
 &=& \frac{1}{A(\lambda-\lambda^{-1})} \sum_{n=0}^{+\infty}
 \left( \frac{\lambda}{A\lambda^{2n+1}+B\lambda}-\frac{\lambda^{-1}}{A\lambda^{2n+1}+B\lambda^{-1}} \right) \\
 &=& \frac{1}{A(\lambda-\lambda^{-1})} \sum_{n=0}^{+\infty}
 \left( \frac{1}{A\lambda^{2n}+B}-\frac{1}{A\lambda^{2n+2}+B} \right) \\
 &=& \frac{1}{A(A+B)(\lambda-\lambda^{-1})}.
 \end{eqnarray*}
 To prove (\ref{eqn:sum}), we have
 \begin{eqnarray*}
 \sum_{n=-\infty}^{+\infty} \frac{1}{y_ny_{n+1}}
 &=& \sum_{n=0}^{+\infty} \frac{1}{y_ny_{n+1}} + \sum_{n=-\infty}^{-1} \frac{1}{y_{n}y_{n+1}} \\ %
 &=& \sum_{n=0}^{+\infty} \frac{1}{y_ny_{n+1}} + \sum_{m=0}^{+\infty} \frac{1}{(B\lambda^{m}+A\lambda^{-m})(B\lambda^{m+1}+A\lambda^{-m-1})} \\ %
 &=& \frac{1}{A(A+B)(\lambda-\lambda^{-1})} + \frac{1}{B(A+B)(\lambda-\lambda^{-1})} \\ %
 &=& \frac{1}{AB(\lambda-\lambda^{-1})}.
 \end{eqnarray*}
 This finishes the proof of Proposition \ref{prop:sums}.
 \end{proof}


\end{document}